\documentclass[12pt]{article}
\usepackage{amsmath,amssymb,amsthm,graphicx}

\newtheorem{thm}{Theorem}[section] 
 \newtheorem{prop}[thm]{Proposition}
\newtheorem{conj}[thm]{Conjecture}

\begin{document}

\date{\small Mathematics Subject Classification: 91A46}

\title{Rook Endgame Problems in $m$ by $n$ Chess}

\author{Thotsaporn ``Aek'' Thanatipanonda\\
\small{\tt thotsaporn@gmail.com}
}

\maketitle
\begin{abstract}

We consider Chess played on an $m \times n$ board
(with $m$ and $n$ arbitrary positive integers),
with only the two kings and the white rook remaining,
but placed at arbitrary positions.
Using the symbolic finite state method, developed by
Thanatipanonda and Zeilberger, we  prove that
on a $3 \times n$ board, for almost all initial positions,
White can checkmate Black in $\leq n+2$ moves, and that
this upper bound is sharp.
We also conjecture that for an arbitrary $m \times n$ board,
with $m,n \geq 4$ (except for $(m,n)=(4,4)$ when it equals $7$), the number of needed moves is $\leq m+n$,
and that this bound is also sharp.

\end{abstract}

\section{Background and Introduction}

Chess is arguably the most popular board game on this planet.
There are numerous combinatorial problems inspired by Chess,
such as the non-attacking queens problem \cite{wiki1}
and the rich theory of rook polynomials \cite{wiki2} .
More related to the actual game of Chess, Noam Elkies analyzed Chess
endgame positions using the theory of combinatorial games \cite{Elkies}.

Our investigation in this article is in an entirely different direction.
We search for an answer to the following  simply stated question:

{\it ``Find the minimal number of moves needed for White to checkmate against
a perfect opponent when playing
on an $m \times n$ board where the only pieces left are
the two kings and the white rook''}  .\\

Of course, White always moves first.

While the question of the {\it minimal} number of moves needed is not
that important in the usual $8 \times 8$ chess, since the
FIDE rules generously allow $50$ moves, so White has to
be an extremely weak player not be able to win.
On the other hand in Thai Chess, he (or she) is only allowed $16$ moves,
so one has to be much more clever, and the present article may be useful
even for the very special $8 \times 8$ case.


\section{Methodology}

In any rook endgame position on an $m \times n$ board,
White can always checkmate within $O(m+n)$ moves.
The following elegant proof was kindly communicated to us by Noam Elkies.

\begin{prop} \label{Elkies}
In a rook endgame on an $m \times n$ board, the winning side
can checkmate within $O(m+n)$ moves.
\end{prop}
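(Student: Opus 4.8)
The plan is to formalize the textbook ``cut off and squeeze'' mating procedure and to bound the number of moves by a potential argument; since only the order $O(m+n)$ is claimed, exact constants and a finite list of tiny degenerate boards (such as $m=1$, where no mate exists) may be ignored. Assume White holds the rook and, after relabelling, that $2\le m\le n$.

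First I would spend one rook move placing the rook on a rank or file that separates the two kings, so that the black king is trapped in a sub-rectangle $B_0$ of the board not containing the white king, with the rook at least two squares from the black king along the line it occupies; the relevant measure of $B_0$ is its semiperimeter $s(B_0)\le m+n$. Throughout what follows, whenever the black king steps next to the rook White answers by sliding the rook to the far end of its current line, preserving both the confinement and the safety of the rook; these ``evasion'' moves will be accounted for separately.

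Next, set
\[
\Phi \;=\; C\cdot s(B)\;+\;d(K_w,K_b),
\]
where $B$ is the current confining rectangle, $d$ is the Chebyshev distance between the kings, and $C$ is a sufficiently large absolute constant; initially $\Phi=O(m+n)$. The heart of the argument is the claim that from any non-terminal position of this type White can, in $O(1)$ moves (at most one of which is a rook evasion), reach a new position of the same type with strictly smaller $\Phi$: while the kings are not yet in opposition across the rook's line, White marches its king one square toward the black king, decreasing $d$; and once they stand in opposition with the rook beside the black king, White plays a waiting move along the rook's line, Black's king is forced to retreat, and White advances the rook one line, cutting $s(B)$ by $1$ at the cost of a bounded increase in $d$ that is absorbed by taking $C$ large. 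Since $\Phi$ is a nonnegative integer of size $O(m+n)$ that decreases by at least $1$ every $O(1)$ moves, after $O(m+n)$ moves one reaches a position with the black king pinned against an edge in opposition, whence a further $O(1)$ moves deliver mate.

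The step I expect to be the main obstacle is making the ``bounded work per unit of progress'' claim fully rigorous on an arbitrary $m\times n$ board: one must define opposition and the waiting manoeuvre precisely, verify that White never stalemates Black nor loses the rook during the squeeze, show that the total number of rook evasions is $O(m+n)$ (each evasion forces Black to spend many tempi returning, during which the king march or the squeeze makes progress), and dispose by direct inspection of the thin rectangles, where the generic waiting manoeuvre degenerates. All of this is routine; the one conceptual point is that a single potential simultaneously controls the approach of the king and the shrinking of the box, which is what yields a linear rather than a quadratic bound.
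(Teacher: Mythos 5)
Your proposal is correct and follows essentially the same route as the paper's proof (due to Elkies): confine the black king to a shrinking rectangle with the rook supported by the king, shrink one dimension every $O(1)$ moves, and mate in the corner, so the total is $O(m+n)$. Your potential-function bookkeeping with $\Phi = C\cdot s(B) + d(K_w,K_b)$ is just a more explicit rendering of the paper's ``it takes only $O(1)$ moves to tighten the noose in one dimension or the other.''
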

\begin{proof} (Noam Elkies)
Assuming both $m$ and $n$ exceed 2.  [If $m=2$ or $n=2$ there are no checkmates at all; if $m=1$ or $n=1$ then either the Black King is already checkmated or no checkmate is possible -- unless White castles on move 1...] It is true that the method taught in most chess manuals takes time proportional to $m\cdot n$ moves.  But this can be reduced to $O(m+n)$ by using the Rook, supported by the King, to restrict the opposing King to
an $a \times b$ rectangle $(a<m, b<n)$: it takes only $O(1)$ moves to tighten the noose in one dimension or the other, decrementing $a$ to $a-1$ or $b$ to $b-1$; in $O(m+n)$ moves, then, the King will be limited to $O(1)$ squares near the corner, and then checkmate follows in another $O(1)$ moves.
\end{proof}

It would be desirable to have explicit, and if possible, sharp,  upper bounds for the needed number of moves.
We use a symbolic finite state method analogous to the one used in \cite{TT2}.

Let us emphasize the importance of the {\it methodology}, that far transcends the actual results.
i.e. that of rigorous experimental mathematics.
We have a fully implemented algorithm, that completely does the proof without any human intervention!

The computer first generates the data from  many $k \times n$ boards, for
{\it numeric} $k$ and $n$. Then it makes {\it symbolic} conjectures.
Finally the computer automatically proves these general (symbolic) conjectures
{\it all by itself}. We believe that this style of proof
will be more commonly used in the future.

\section {Detail Example}
In this section, we give an example of how
the problem could be done on a $3 \times n$ board.
However, it becomes much harder to handle
wider widths.

\subsection{Example on a $3 \times n$ board}

We fix some ``natural" positions on
a $3 \times n$ board given by

\includegraphics[scale=0.5]{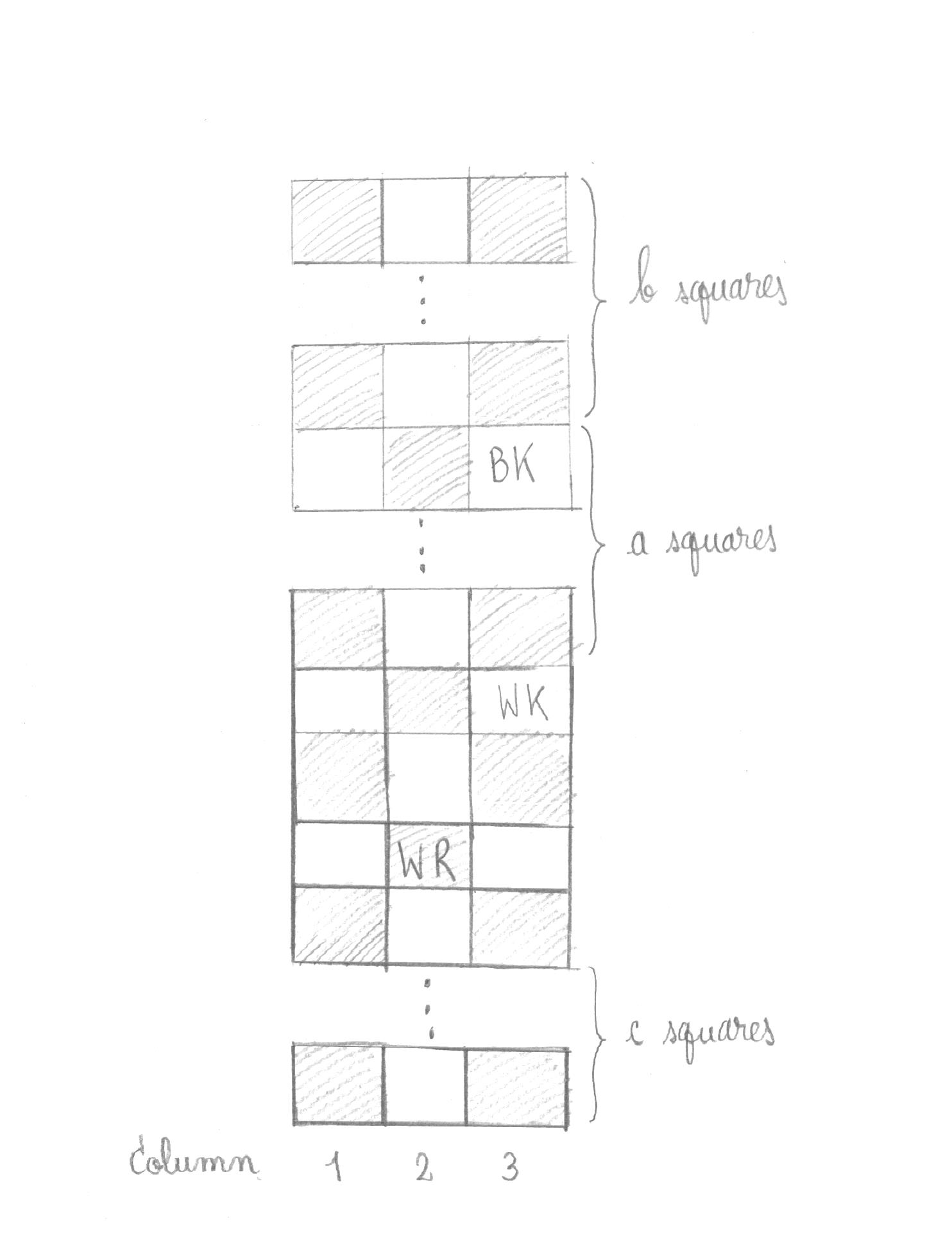}

\noindent The  black King must be above the
white King and the white King must be above the Rook, as shown in the diagram.\\
$a$ is the distance between the black King and the white King.\\
$b$ is the distance of the black King to the end of the board.\\
$c$ is the distance of the Rook to the other end of the board,
but is irrelevant in the calculation.\\

\noindent Let $x$ be the column of the  black King. \\
Let $y$ be the column of the white King. \\
Let $z$ be the column of the white Rook. \\
Let $f_{x,y,z}(a,b)$ be the number of moves needed
to checkmate with the above initial position. \\

We gives sharp upper bounds for all possible
$f_{x,y,z}(a,b), a \geq 0, b \geq 0$.
The proof comprises of three steps, generating data,
making conjectures and proving conjectures. \\

The claims are as follows.

\[  \begin{array}{ll}
 f_{1,1,1}(a,b) &\leq  \;\ a+b+1  \;\ \;\ ,a \geq 2. \\
 f_{1,1,2}(a,b) &\leq \left\{ \begin{array}{ll}
            a+b+1  &,a \;\ \mbox{is even and }a \geq 2. \\
            a+b    &,a \;\ \mbox{is odd and }a \geq 2.
\end{array} \right. \\
f_{1,1,3}(a,b)  &\leq  \;\ a+b+1  \;\ \;\ ,a \geq 2. \\
f_{1,2,2} (a,b)  &\leq  \;\ a+b  \;\ \;\ \;\ \;\ \;\ ,a \geq 2. \\
f_{1,2,3}(a,b)  &\leq \left\{ \begin{array}{ll}
            a+b+2  &,a \;\ \mbox{is even and }a \geq 2. \\
            a+b+1    &,a \;\ \mbox{is odd and }a \geq 2.
\end{array} \right. \\
f_{1,3,2}(a,b) &\leq \left\{ \begin{array}{ll}
            1 &,a = 0. \\
            a+b  &,a \;\ \mbox{is even and }a \geq 1. \\
            a+b+1    &,a \;\ \mbox{is odd and }a \geq 1.\\
\end{array} \right. \\
f_{1,3,3} (a,b) &\leq \left\{ \begin{array}{ll}
            1 &,a = 0. \\
            a+b+2  &,a \;\ \mbox{is even and }a \geq 1. \\
            a+b+1    &,a \;\ \mbox{is odd and }a \geq 1.\\
\end{array} \right. \\
f_{2,1,1} (a,b)  &\leq  \;\ a+b+2  \;\ \;\ ,a \geq 2. \\
f_{2,1,3} (a,b)  &\leq \left\{ \begin{array}{ll}
            a+b+2  &,a \;\ \mbox{is even and }a \geq 2. \\
            a+b+1    &,a \;\ \mbox{is odd and }a \geq 2.
\end{array} \right. \\
f_{2,2,1} (a,b)  &\leq  \;\ a+b+1  \;\ \;\ ,a \geq 2. \\
f_{2,2,2} (a,b)  &\leq \left\{ \begin{array}{ll}
            a+b+1  &,a \;\ \mbox{is even and }a \geq 2. \\
            a+b   &,a \;\ \mbox{is odd and }a \geq 2.
\end{array} \right. \\
\end{array}
\]

Once we got the right set-up, the proof by
induction on $a+b$ falls through naturally.

In order to illustrate our method,
we will now present all the details, in a humanly-readable prose,
for the inductive step for the first assertion above i.e. $f_{1,1,1}(a,b) \leq a+b+1$ ($a \geq 2$).
Of course, this proof was originally discovered by the computer, and the computer quickly
does {\it all} the cases. Once discovered, the computer proof is almost instantaneous, but
{\it discovering} the proposed proof takes longer.

\begin{proof} \textbf{Base Case:} Verify that all the conjectures are true
for $a+b \leq 3 \;\ \mbox{where} \;\ a \geq 0, b \geq 0.$ \\

\textbf{Induction Step:} Consider $f_{1,1,1}(a,b);$ \\

\textbf{Case 1:} $a$ is even.

White chooses to move his Rook to the second column.
Black's only legal moves are to move his King up or down the
first column. By the inductive  hypothesis,
in case the black King moves up, it would take
at most $f_{1,1,2}(a+1,b-1)
= (a+1)+(b-1) = a+b$ moves to checkmate.
In case the black King moves down, it would take
at most $f_{1,1,2}(a-1,b+1)
= (a-1)+(b+1) = a+b$ moves to checkmate.
Therefore case 1 takes at
most $(a+b)+1$ moves to checkmate. \\

\textbf{Case 2:} $a$ is odd.

White chooses to move his King in an up-right direction.
Then the black King will be in check and must move
to the second column. By the induction assumption,
it would take at most max$\{f_{2,2,1}(a-2,b+1),
f_{2,2,1}(a-1,b),f_{2,2,1}(a,b-1)\}=(a+b-1)+1$
 moves to checkmate. Therefore this case takes
 $(a-1+b)+1+1=a+b+1$ moves to checkmate. \\

The proof of case $f_{1,1,1}(a,b)$ is done.
The rest of the proof is left to the readers, if they wish,
but they may prefer to look at the (humanly readable!) computer's full proof at

{\tt http://wannik.com/thotsaporn/Rook.html} .

\end{proof}

\subsection { The Maple programs}
The present method was implemented using Maple.
The two programs, as well as sample output files, are freely available from:

{\tt http://wannik.com/thotsaporn/Rook.html}  \\

\section {Conjecture} \label{conj}

We end this article with an ultimate
conjecture suggested strongly by numerical
evidences for  small $m$ and $n$, given in Table 1. \\

Let $U(m,n)$ be the maximum of the minimal number
of moves needed to checkmate from any initial position of a
rook endgame on an $m \times n$ board.

\begin{table}[ht]
\caption{Values of $U(m,n)$ }
\centering 
\begin{tabular}{|c|c|c|c|c|c|c|c|c|c|c|c|}
  \hline
  $m \backslash n$  & 3 & 4 & 5 & 6 & 7 & 8 & 9 & 10 & 11 & 12 & 13  \\
    \hline
  3   & 3 & 5 & 7  & 8  & 9  & 10 & 11 & 12 & 13 & 14 & 15  \\
  4   & - & 7 & 9  & 10 & 11 & 12 & 13 & 14 & 15 & 16 & 17  \\
  5   & - & - & 10 & 11 & 12 & 13 & 14 & 15 & 16 & 17 & 18  \\
  6   & - & - & -  & 12 & 13 & 14 &  &  & &  &   \\
  7   & - & - & -  & -  & 14 & 15 &  &  & &  &   \\
  8   & - & - & -  & -  &  - & 16 &  &  &  &  &   \\
  \hline
\end{tabular}
\end{table}

\begin{conj} Let $m,n$ be an integer such that
$m \geq 4$ and $n \geq 4$ except $(m,n)=(4,4)$.
In a rook endgame on an $m \times n$ board,
the winning side can checkmate within $m+n$ moves, and
this upper bound is sharp.
\end{conj}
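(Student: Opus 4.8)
The plan is to mirror the $3 \times n$ argument on a fully general $m \times n$ board, but now managing a two‑dimensional parameter space of ``natural'' positions. First I would fix a canonical family of positions analogous to the $(a,b)$ setup: put the black King near a corner, confine it to a rectangle by the rook, and record the relevant distances — say $a$ for the separation of the two kings, $b$ for the distance from the black King to the nearer edge in the direction the rook is cutting, and a second pair $(a',b')$ (or a column index) describing the perpendicular direction. The first real step is to enumerate the finite list of ``column types'' for the triple (black King, white King, rook) just as the excerpt does for width $3$ (the cases $f_{1,1,1}, f_{1,1,2}, \ldots$); for general $m$ this list is still finite because only the positions near the relevant edge matter, the rest being captured symbolically by the distance parameters. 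The second step is to let the computer generate data on many numeric $k \times \ell$ boards, fit symbolic upper bounds $f_{\text{type}}(a,b,\ldots) \le a+b+\text{const}$ (with the const depending on parity, as in the $3 \times n$ case), and then run the automated induction on $a+b$ exactly as in Section 3: each inductive step exhibits one White move and bounds Black's replies by previously established cases, all checked mechanically.

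For the sharpness half, the plan is to exhibit, for each $(m,n)$ in range, an explicit starting position from which $m+n$ moves are genuinely required, and to verify the lower bound by a backward (retrograde) analysis — compute the exact game value of that position by the standard minimax/retrograde labelling on the finite position graph, which the same software already does to produce Table 1. The natural candidate is the black King in the centre (or as far as possible from every edge and from the white King), since the $O(m+n)$ ``tighten the noose'' strategy of Proposition~\ref{Elkies} needs roughly one tempo per unit of perpendicular distance plus the march along the edge, and the table values $U(m,n)=m+n$ for $m,n\ge 5$ (and the shifted small cases) are exactly consistent with a King starting a bounded distance from the corner region.

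The sections would then be organised as: (1) reduction to finitely many column types and definition of the $f$'s; (2) statement of the full symbolic table of conjectured bounds (the analogue, now in two running parameters, of the displayed list in Section 3); (3) the base case, a finite check for small $a+b$; (4) the inductive step, presented in human‑readable prose for one representative type with the rest delegated to the machine‑generated proof file; and (5) the matching lower‑bound positions with their retrograde‑verified values, yielding sharpness.

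The main obstacle I anticipate is exactly the one the authors flag: ``it becomes much harder to handle wider widths.'' Concretely, the number of distinct column types — and hence the number of mutually recursive symbolic inequalities the induction must juggle — grows with $m$, and unlike the $3 \times n$ case the confining rectangle shrinks in two directions, so the parameter space is genuinely two‑dimensional and the induction must be on $a+b$ with a delicate secondary structure on the perpendicular coordinate. Getting a set of bounds that is simultaneously (i) tight enough to give $m+n$ and (ii) self‑consistent under the one‑move lookahead for every type and every parity class is the creative bottleneck; once the right table is found, the verification is, as in the excerpt, ``almost instantaneous.'' The anomalous $(4,4)$ value of $7$ strongly suggests that the clean pattern only stabilises once both dimensions are at least $5$, so the argument will likely need a handful of separately‑handled small boards (the $m=4$ and $n=4$ rows) before the uniform symbolic induction takes over for $m,n \ge 5$.
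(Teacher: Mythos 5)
The statement you are addressing is left as a \emph{conjecture} in the paper: it is supported only by the numerical evidence of Table 1 (exhaustive retrograde analysis of finitely many small boards), and no proof, symbolic or otherwise, is given. So there is no paper proof to compare against, and your proposal must be judged as a research plan. As such it correctly identifies the intended method (the symbolic finite-state induction of Section 3) and, to its credit, also identifies the obstacle --- but it does not overcome it, and the obstacle is more serious than a mere growth in bookkeeping. In the $3 \times n$ case the ``column types'' $(x,y,z)$ range over a set of size bounded independently of $n$ because only one dimension is symbolic; the width is a fixed numeral $3$. For the conjecture, both $m$ and $n$ are symbolic, so the confining rectangle shrinks in two directions and the positions ``near the relevant edge'' in the perpendicular direction are themselves parametrized by a symbolic distance, not by finitely many types. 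You would need either a genuinely two-dimensional symbolic induction (a finite system of mutually recursive inequalities in the running parameters for both directions, closed under one-move lookahead), or a preliminary reduction showing that after $O(1)$ moves play is confined to a $k \times n$ strip for some fixed numeral $k$. Your proposal gestures at the former but supplies neither the table of bounds nor any argument that a self-consistent such table exists; that is precisely the missing content, and it is why the paper stops at a conjecture.

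The sharpness half has a separate gap. Retrograde/minimax analysis certifies a lower bound only for each fixed numeric $(m,n)$; that is exactly how Table 1 was produced and exactly why the statement remains unproved. To establish sharpness for all $m,n \geq 4$ you need a \emph{symbolic} lower-bound argument: an explicit Black strategy (or a potential-function argument) showing that from your candidate position every White strategy requires at least $m+n$ moves. This is not delivered by ``the same software already does to produce Table 1,'' and such lower bounds are typically harder to automate than the upper bounds, since they quantify over all White strategies rather than exhibiting one.
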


\section {Acknowledgment}
    I thank Doron Zeilberger for introducing me to this
    beautiful, but hard, problem.
    I also thank my twin brother, Thotsaphon Thanatipanonda,
for his help with programming at the beginning of this project.

\end{document}